\documentclass[leqno,11pt,a4paper]{article}
\usepackage[left=2.0cm, top=2.2cm,bottom=2.5cm,right=2.0cm]{geometry}
\usepackage{amsmath,amssymb,amsthm,mathrsfs,calc,graphicx}
\usepackage[british]{babel}

\newtheorem{theorem}{Theorem}

\newtheorem{proposition}[theorem]{Proposition}

\newtheorem{remark}[theorem]{Remark}

\def\ba{\begin{array}}
\def\ea{\end{array}}
\def\bea{\begin{eqnarray} \label}
\def\eea{\end{eqnarray}}
\def\be{\begin{equation} \label}
\def\ee{\end{equation}}
\def\bit{\begin{itemize}}
\def\eit{\end{itemize}}
\def\ben{\begin{enumerate}}
\def\een{\end{enumerate}}

\def\PHT{\textup{PLT}}


\def\EE{\mathbb{E}}

\def\PP{\mathbb{P}}
\def\QQ{\mathbb{Q}}
\def\RR{\mathbb{R}}


\def\g{\gamma}
\def\d{\delta}
\def\e{\varepsilon}

\def\s{\sigma}

\def\L{\Lambda}


\def\bP{\mathbf{P}}


\def\cC{\mathcal{C}}

\def\cH{\mathcal{H}}

\def\cL{\mathcal{L}}

\def\cP{\mathscr{P}}

\def\dint{\textup{d}}

\begin{document}

\title{\bfseries Asymptotic shape of small cells}

\author{Mareen Beermann\footnotemark[1], Claudia Redenbach\footnotemark[2] and Christoph Th\"ale\footnotemark[3]}

\date{}
\renewcommand{\thefootnote}{\fnsymbol{footnote}}
\footnotetext[1]{University of Osnabr\"uck, Institute of Mathematics, Albrechstra\ss e 28a, D-49076 Osnabr\"uck, Germany. E-mail: mareen.beermann@uos.de}

\footnotetext[1]{Technical University of Kaiserslautern, Institute of Mathematics, Erwin-Schr\"odinger-Stra\ss e, D-67653 Kaiserslautern, Germany. E-mail: redenbach@mathematik.uni-kl.de}

\footnotetext[3]{
Ruhr-University Bochum, Faculty of Mathematics, NA 3/68, D-44780 Bochum, Germany. E-mail: christoph.thaele@uos.de}

\maketitle

\begin{abstract}
A stationary Poisson line tessellation is considered whose directional distribution is concentrated on two different atoms with some positive weights. The shape of the typical cell of such a tessellation is studied when its area or its perimeter tends to zero. In contrast to known results where the area or the perimeter tends to infinity, it is shown that the asymptotic shape of cells having small area is degenerate. Again in contrast to the case of large cells, the asymptotic shape of cells with small perimeter is not uniquely determined. The results are accompanied by a large scale simulation study.
\bigskip
\\
{\bf Keywords}. {Asymptotic shape, random geometry, random polygon, Poisson process, Poisson line tessellation, random tessellation, small cells, stochastic geometry.}\\
{\bf MSC}. Primary  60D05; Secondary 60G55, 52A22.
\end{abstract}

\section{Introduction}

In the preface of the book \cite{SKM}, D.G.\ Kendall re-phrased a conjecture about the shape of planar tessellation cells having large area. He considered a stationary and isotropic Poisson line tessellation in the plane and conjectured that the shape of the cell containing the origin is approximately circular if its area is large. First contributions to Kendall's conjecture are due to Goldmann \cite{Goldmann}, Kovalenko \cite{Kovalenko97} and Miles \cite{MilesHeuristic}. The first result for higher dimensions is by Mecke and Osburg \cite{MeckeOsburg} who considered what they call Poisson cuboid tessellations. In a series of papers, Calka \cite{Calka2002}, Calka and Schreiber \cite{CalkaSchreiber}, Hug, Reitzner and Schneider \cite{HugReitznerSchneiderZeroPHT} and Hug and Schneider \cite{HugSchneiderDelaunay,HugSchneiderGFA} treated very general higher-dimensional versions and variants of Kendall's problem for quite general tessellation models (Poisson hyperplanes, Poisson-Voronoi and Poisson-Delaunay tessellations) and size functionals, see also the book chapter \cite{CalkaBook} for an overview. The respective results either rely on asymptotic theory for high-density Boolean models or on sharp inequalities of isoperimetric type.

\begin{figure}[t]
 \begin{center}
\includegraphics[viewport= 5.69032cm 2.83cm 11.14cm 8.25cm, clip, scale=1]{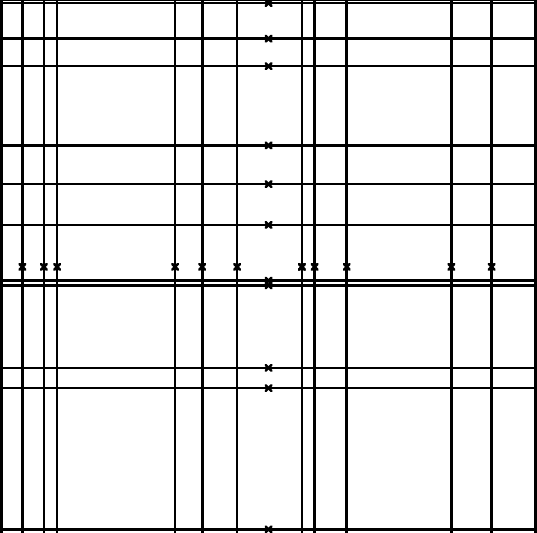}\hspace{1.3cm}
\includegraphics[viewport= 7cm 5.2cm 9.6cm 8cm, clip, angle=90.75, scale=2.1]{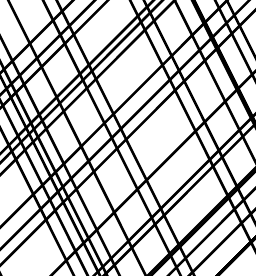}
 \caption{ \label{figRect} \small A rectangular Poisson line tessellation (left) and a non-orthogonal Poisson parallelogram tessellation (right).}
 \end{center}
\end{figure}

In this paper, we focus on the analysis of the shape of small tessellation cells. So far, we were not able to discover a general principle as the one mentioned above for the large cells behind the asymptotic geometry of small cells. For this reason, we restrict attention to the following simple model and its affine images. Take two independent stationary (homogeneous) Poisson point processes of unit intensity on the two coordinate axes in the plane and draw vertical lines through the points on the $x$-axis and horizontal lines through the points on the $y$-axis; see Figure \ref{figRect} (left). The collection of these lines (without the two coordinate axes) decomposes the plane into a countable number of non-overlapping rectangles, the collection of which is called a \textit{rectangular Poisson line tessellation}; see \cite{Favis}. Of interest in the present paper is the shape of a typical rectangle of the tessellation (the precise definition follows below). Mecke and Osburg \cite{MeckeOsburg} have shown that a typical rectangle tends to be `more and more cubical as the area tends to infinity'. In the present paper we are interested in the converse question and ask for the shape of a typical rectangle of small area. We will show that, in contrast to the large area case, the shape of typical rectangles with small area is asymptotically degenerate. Besides rectangles of small area, we also consider rectangles that have small perimeter. For such a situation we obtain a uniform distribution for our parameter measuring the shape of the rectangles (in fact not in general, but at least for the case described above). Again, this result is in contrast to the large perimeter case indicated in \cite{MeckeOsburg} (with proofs given in \cite{OsburgDiplom}). We would like to stress the fact that this is the first paper dealing with the mathematical analysis of small cells in random tessellations although some conjectures together with heuristic arguments have appeared earlier in \cite{MilesHeuristic}.

\bigskip

The rest of this text is structured as follows. In Section \ref{sec:results} we present the mathematical framework, the statements of our results, Theorem \ref{thm1} and Theorem \ref{thm:smallPerimeter}, a large scale simulation study as well as an outlook to higher space dimensions. Sections \ref{sec:ProofKantenlaengen} to \ref{sec:ProofPerimeter} are devoted to the proofs of the results.

\section{Results}\label{sec:results}

\subsection{Framework}
Denote by $\cL$ the space of lines in $\RR^2$ and by $\cL_0$ the subspace containing only lines through the origin. We let $L_1$ and $L_2$ be two different lines in $\cL_0$ and fix $q\in(0,1)$. On $\cL_0$ we define the probability measure $\QQ$ by $\QQ=q\d_{L_1}+(1-q)\d_{L_2}$, where $\d_{L_i}$ stands for the Dirac measure concentrated at $L_i$, $i=1,2$. This is to say, $\QQ$ is concentrated on $L_1$ and $L_2$ with weights $q$ and $1-q$, respectively. We also define the translation-invariant measure $\L$ on $\cL$ by the relation
\begin{equation}\label{eq:Lambda}
 \int\limits_{\cL} f(L) \L(\dint L) = \g\int\limits_{\cL_0}\int\limits_{G^\perp} f(G+x)\,\ell_{G^\perp}(\dint x)\QQ(\dint G),
\end{equation}
where $\ell_{G^\perp}$ stands for the Lebesgue measure on $G^\perp$, $\g\in(0,\infty)$ and $f:\cL\to\RR$ is a non-negative measurable function. In other words, $\L$ is concentrated on two families of lines parallel to $L_1$ and $L_2$, whereas $\gamma$ is an intensity parameter.

Let now $\eta$ be a Poisson point process on $\cL$ with intensity measure $\L$ as at \eqref{eq:Lambda}; cf.\ \cite{SW,SKM} for definitions. Clearly, the lines of $\eta$ decompose the plane into countably many parallelograms -- called \textit{cells} in the sequel -- which have pairwise no interior points in common; see Figure \ref{figRect} (right). The collection of all cells is denoted by $\cC=\cC(\eta)$ and the intersection point of the two diagonals of a parallelogram $\cP$ is denoted by $c(\cP)$. We define a probability law $\bP$ on the (measurable) space of parallelograms as follows:
\begin{equation}\label{eq:DistributionTypical}
 \bP(A) = \lim_{n\rightarrow\infty}{\EE \sum\limits_{\cP\in\cC}{\bf 1}\{\cP-c(\cP)\in A\}\,{\bf 1}\{c(\cP)\in[n]\}\over\EE\sum\limits_{\cP\in\cC}{\bf 1}\{c(\cP)\in[n]\}}\,,
\end{equation}
where $A$ is a measurable subset of parallelograms and $[n]$ stands for the centered square of area $n$. The definition \eqref{eq:DistributionTypical} formalizes the idea of a uniformly selected cell from $\cC$ (regardless of size and shape) and we call a random parallelogram with distribution $\bP$ a \textit{typical cell} of the tessellation; cf.\ \cite{SW,SKM} for background material. Note that by definition the typical cell $C$ is centered in the origin, i.e. $c(C)=0$. 

The following fact will turn out to be crucial for our further investigations. We state it here for two dimensions, its proof, however, will be presented in Section \ref{sec:ProofKantenlaengen} below for the analogous model in arbitrary space dimension. This extension is applied in Section \ref{sec:simulations}.

\begin{proposition}\label{prop:Kantenlaengen}
The edge lengths of the typical cell of a Poisson parallelogram tessellation with intensity measure $\L$ given by \eqref{eq:Lambda} are independent and exponentially distributed random variables with parameters $\g(1-q)|\cos\alpha|$ (for the edge parallel to $L_1$) and $\g q|\cos\alpha|$ (for the edge parallel to $L_2$), where $\alpha=\angle(L_1,L_2^\perp)$ is the intersection angle between $L_1$ and $L_2^\perp$.
\end{proposition}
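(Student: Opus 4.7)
The plan is to reduce to the rectangular case by an invertible linear change of coordinates. First, I would split $\eta=\eta_1\cup\eta_2$ by direction: since the intensity measure $\L$ in \eqref{eq:Lambda} is the sum of two measures, one supported on lines parallel to $L_1$ and one on lines parallel to $L_2$, the Poisson superposition theorem gives that $\eta_1$ and $\eta_2$ are independent Poisson processes whose perpendicular offsets form Poisson point processes on $L_i^\perp$ of intensities $\g q$ and $\g(1-q)$, respectively. Fixing unit direction vectors $u_1,u_2$ of $L_1,L_2$, I would then apply the linear isomorphism $T:\RR^2\to\RR^2$, $T(p)=(\lan p,u_1^\perp\ran,\lan p,u_2^\perp\ran)$, which is invertible because $L_1\neq L_2$. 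Under $T$ the lines of $\eta_1$ become vertical and those of $\eta_2$ horizontal with perpendicular offsets unchanged, so $T(\eta)$ is a rectangular Poisson line tessellation with vertical- and horizontal-line intensities $\g q$ and $\g(1-q)$.

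For this rectangular tessellation, a direct Slivnyak--Mecke computation applied to the two independent one-dimensional Poisson processes shows that the typical cell is a rectangle with independent exponential edge lengths of rates $\g q$ (horizontal) and $\g(1-q)$ (vertical). Pulling back through $T^{-1}$, the horizontal direction in $T$-coordinates corresponds to $u_2/|\cos\a|$ in the original coordinates, because $|\lan u_2,u_1^\perp\ran|=|\cos\a|$, and symmetrically for the vertical direction. Hence a horizontal edge of length $E$ in $T$-coordinates becomes an edge parallel to $L_2$ of length $E/|\cos\a|$. Since dividing an $\mathrm{Exp}(\mu)$ variable by $|\cos\a|$ yields an $\mathrm{Exp}(\mu|\cos\a|)$ variable, the typical cell then has an edge parallel to $L_2$ distributed as $\mathrm{Exp}(\g q|\cos\a|)$ and an independent edge parallel to $L_1$ distributed as $\mathrm{Exp}(\g(1-q)|\cos\a|)$, as claimed.

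The step requiring most care is the transformation law for the typical cell under $T$. I would verify (i) that the centre of a parallelogram, being the intersection of its diagonals and hence the centroid of its four vertices, is preserved by every linear map, so $c(T\cP)=T(c(\cP))$, and (ii) that the Palm limit in \eqref{eq:DistributionTypical} is independent of the averaging sequence: replacing the squares $[n]$ by the parallelograms $T^{-1}([n])$ yields the same typical-cell distribution, since both are van Hove sequences for the stationary cell-centre process. Combined, (i) and (ii) imply that the typical cell of $\eta$ equals in distribution $T^{-1}$ applied to the typical cell of $T(\eta)$, which legitimises the reduction.
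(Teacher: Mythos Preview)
Your argument is correct and takes a genuinely different route from the paper. The paper proves the $d$-dimensional version directly: it realises the typical cell as a specific corner sub-parallelepiped of the zero cell $Z_0$ (via Theorem~10.4.7 in \cite{SW}), so that one vertex sits at the origin; the edge lengths are then simply the distances from the origin to the nearest points of the independent one-dimensional Poisson processes induced on the lines $L_1,\ldots,L_d$, hence independent exponentials with the stated rates. No change of coordinates and no transformation law for the typical cell is needed.

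Your approach instead linearises to the orthogonal case, handles that case, and pulls back. This is clean and in fact anticipates the reduction the paper itself uses later (Section~4.1) for Theorem~\ref{thm1}, so it unifies the two arguments nicely. The cost is twofold: first, the transformation law for the typical cell under a linear map has to be justified, which you do carefully via (i) equivariance of the centroid and (ii) independence of the Palm limit from the van~Hove averaging sequence; second, the rectangular base case still needs an argument. Your reference to Slivnyak--Mecke is a bit loose here, since the cell centres are not themselves a Poisson process; the cleanest justification is the one the paper gives (typical spacing of a one-dimensional Poisson process is exponential, applied to each axis independently), or equivalently the zero-cell trick in dimension one. The paper's approach, by contrast, extends verbatim to $d\ge 3$, whereas your reduction would require tracking a $d\times d$ Jacobian and the $d$ pairwise angles.
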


\subsection{Results for small cells}

We consider the typical cell of a Poisson line tessellation as described above. Its random edge lengths are denoted by $X$ and $Y$ and its area by $A=XY$. To measure the shape of the typical cell we introduce two \textit{deviation functionals}. The first one is
\begin{equation}\label{eq:DefSigma}
 \sigma = 2\,{\min\{X,Y\}\over X+Y},
\end{equation}
which is a random variable taking values in $[0,1]$ (this was the reason for the choice for the factor $2$). We notice that $\sigma$ is scale invariant, i.e., $\sigma$ does not change if the parallelogram is rescaled by some constant factor. Moreover, we have $\sigma=0$ if exactly one of the edge lengths $X$ or $Y$ is zero, i.e., if the parallelogram degenerates in that it is a line segment of positive length. For single points, i.e. $X=Y=0$, $\sigma$ is not defined. As a second deviation functional we introduce
\begin{equation}\label{eq:DefTau}
 \tau = \max\{X,Y\}.
\end{equation}
This is not a scale invariant quantity, but we notice that $\tau=0$ if and only if the parallelogram is degenerated to a point.

We investigate first the asymptotic behavior of the deviation functionals $\sigma$ and $\tau$ under the condition that the typical cell area $A$ tends to zero. Our main result in this direction reads as follows.
\begin{theorem}\label{thm1}
Let $0<\e<\frac{1}{2}$. It holds that
\begin{equation}\label{ErgebnisMinRate}
\PP(\sigma>\e|A<a)=O(\sqrt{a})\qquad{\rm as}\qquad a\to 0
\end{equation}
and consequently
\begin{equation}\label{ErgebnisMinLimit}
\lim_{a\to 0}\PP(\sigma>\e|A<a)=0.
\end{equation}
Moreover, 
\begin{equation}\label{ErgebnisMax}
\lim_{a\to 0}\PP(\tau>\e|A<a)=0.
\end{equation}
\end{theorem}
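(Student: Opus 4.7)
The strategy is to invoke Proposition~\ref{prop:Kantenlaengen} so that the edge lengths $X,Y$ of the typical cell become independent exponential random variables with rates $\lambda_1=\gamma(1-q)|\cos\alpha|$ and $\lambda_2=\gamma q|\cos\alpha|$, and then to write
$$
\PP(\sigma>\e\mid A<a)=\frac{\PP(\sigma>\e,\,A<a)}{\PP(A<a)},\qquad \PP(\tau>\e\mid A<a)=\frac{\PP(\tau>\e,\,A<a)}{\PP(A<a)},
$$
estimating numerator and denominator separately by direct computations with the joint exponential density of $(X,Y)$.

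\textbf{Controlling the numerators.} For $\sigma$, set $c=(2-\e)/\e$; note $c>3$ since $\e<1/2$. A short manipulation of \eqref{eq:DefSigma} gives $\{\sigma>\e\}=\{\max\{X,Y\}\leq c\min\{X,Y\}\}$, so on this event the two edges are comparable. Intersecting with $\{XY<a\}$ forces $\min\{X,Y\}^2\leq XY<a$, hence both edges lie in $[0,c\sqrt a]$, and independence gives
$$
\PP(\sigma>\e,\,A<a)\leq\PP(X\leq c\sqrt a)\,\PP(Y\leq c\sqrt a)=O(a).
$$
For $\tau$, the event $\{\tau>\e\}\cap\{XY<a\}$ is contained in $\{X>\e,\,Y<a/\e\}\cup\{Y>\e,\,X<a/\e\}$, and using independence and the exponential tails one similarly obtains $\PP(\tau>\e,\,A<a)=O(a)$.

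\textbf{Denominator and assembly.} A crude lower bound $\PP(A<a)\geq \PP(X\leq\sqrt a,\,Y\leq\sqrt a)\sim\lambda_1\lambda_2 a$ combined with the numerator estimates is already enough for the qualitative limits \eqref{ErgebnisMinLimit} and \eqref{ErgebnisMax}. To pin down the quantitative rate \eqref{ErgebnisMinRate}, I would sharpen this analysis through the integral representation
$$
\PP(A<a)=\int_0^{\infty}\bigl(1-e^{-\lambda_2 a/x}\bigr)\lambda_1 e^{-\lambda_1 x}\,dx,
$$
splitting the range into $x\lesssim a$ and $x\gtrsim a$, and assemble the resulting two-sided estimate with the explicit $O(a)$ bound on $\PP(\sigma>\e,\,A<a)$ (obtained by integrating the joint density over the geometrically explicit region $\{\max\leq c\min,\,xy<a\}$) to obtain the stated polynomial rate in $a$.

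\textbf{Main obstacle.} The geometric reductions for the numerators are essentially one-line observations about the admissible region in the $(X,Y)$-plane; once these are in place, the limit statements \eqref{ErgebnisMinLimit} and \eqref{ErgebnisMax} follow immediately from any reasonable lower bound on $\PP(A<a)$. The technical core of the proof is therefore the fine behaviour of $\PP(A<a)$ as $a\to 0$: the distribution of the product of two independent exponentials carries a logarithmic singularity at the origin (expressible through the modified Bessel function $K_0$), and it is the interplay of this singularity with the $O(a)$ numerator that must be tracked carefully to yield the rate in \eqref{ErgebnisMinRate}.
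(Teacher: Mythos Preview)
Your numerator estimates are clean and correct: the inclusion $\{\sigma>\e,\,XY<a\}\subset\{X\le c\sqrt a,\,Y\le c\sqrt a\}$ gives $\PP(\sigma>\e,\,A<a)=O(a)$, and the $\tau$-bound is similar. The gap is the sentence asserting that the crude lower bound $\PP(A<a)\ge\PP(X\le\sqrt a,\,Y\le\sqrt a)\sim\lambda_1\lambda_2\,a$ already yields the qualitative limits. It does not: an $O(a)$ numerator over an $\Omega(a)$ denominator gives only a bounded ratio. And your numerator bound is sharp in order---integrating the joint density over the wedge $\{\e x/(2-\e)<y<x,\ xy<a\}$ one finds $\PP(\sigma>\e,\,A<a)\sim C_\e\,a$---so no slack is available on that side. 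To obtain even \eqref{ErgebnisMinLimit} and \eqref{ErgebnisMax} you must already exploit the logarithmic gain you relegate to the ``rate'' step: the $K_0$-type singularity you mention gives $\PP(A<a)\asymp a\,|\log a|$, and only with this does the quotient become $O(1/|\log a|)\to 0$. The same remark applies to $\tau$.

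For comparison, the paper's route is different in two respects. It first reduces to the symmetric orthogonal case $\lambda_1=\lambda_2=1$ by noting that the statement is invariant under non-degenerate linear maps of the plane, which you bypass by working with general rates throughout. For the denominator it does not go through $K_0$ but substitutes $s=(x+y)/\sqrt a$, $t=(x-y)/\sqrt a$ to reach $\PP(A<a)=1-a\int_2^\infty e^{-\sqrt a\,s}\sqrt{s^2-4}\,\dint s$ and then invokes an Abelian theorem for Laplace transforms; for \eqref{ErgebnisMax} it applies l'H\^opital's rule to the quotient rather than bounding numerator and denominator separately. Your proposed splitting of the integral representation of $\PP(A<a)$ is arguably more transparent than the Abelian-theorem device, but it has to be deployed from the start, not held in reserve for the rate.
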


Some comments are in order about the interpretation of Theorem \ref{thm1}. Firstly, \eqref{ErgebnisMinLimit} shows that the asymptotic shape of a typical cell of small area tends to that of a line segment. On the other hand, \eqref{ErgebnisMax} shows that, in the limit, this line segment cannot have positive length. 
This phenomenon is well reflected in the simulation study presented in Section \ref{sec:simulations}. We also remark that \eqref{ErgebnisMinRate} gives an upper bound for the rate of convergence.

Besides cells of small area, also cells with small perimeter can be considered. In this case the picture is somewhat different from that presented for the small area case in Theorem \ref{thm1}. In what follows we denote by $P=X+Y$ half of the perimeter length of the typical cell (the factor $1/2$ is chosen for simplicity as will become clear in the proof).
\begin{theorem}\label{thm:smallPerimeter}
Let $0<\e<1$. If $q=1/2$, $\s$ is uniformly distributed on $[0,1]$ given that $P<p$, i.e., $$\PP(\s>\e|P<p)=1-\e,$$ independently of $p$. If otherwise $q\neq 1/2$,
\begin{equation*}
\begin{split}
&\PP(\s>\e |\,P<p) \\ &= {4\g_1\g_2\big((\g_1-\g_2)\e +\g_1 -\g_2-(\e(\g_1-\g_2)-2\g_1)e^{-{\g_1+\g_2\over 2}p}-(\g_1+\g_2)e^{-{2\g_1- \e(\g_1-\g_2)\over 2}p}\big)\over (\g_1+\g_2)(\g_1(1-e^{-\g_2p})-\g_2(1-e^{-\g_1 p}))(\e(\g_1-\g_2)-2\g_1)}\,,
\end{split}
\end{equation*}
where $\g_1=\g(1-q)|\cos\alpha|$ and $\g_2=\g q|\cos\alpha|$ with $\alpha=\angle(L_1,L_2^\perp)$.
\end{theorem}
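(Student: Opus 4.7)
The plan is to apply Proposition \ref{prop:Kantenlaengen} and then to perform an explicit change of variables on the joint density of the two edge lengths. By Proposition \ref{prop:Kantenlaengen}, specialised to two dimensions, the edge lengths $X$ and $Y$ of the typical cell are independent with $X$ exponentially distributed of rate $\g_1=\g(1-q)|\cos\alpha|$ and $Y$ of rate $\g_2=\g q|\cos\alpha|$, so their joint density is $f_{X,Y}(x,y)=\g_1\g_2 e^{-\g_1 x-\g_2 y}$ on $(0,\infty)^2$. All subsequent computations will be performed with respect to this density.

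For the symmetric case $q=1/2$ (in which $\g_1=\g_2=:\g$), I would use the fact that $P=X+Y$ has an Erlang distribution of shape two and that, conditionally on $\{P=p\}$, the variable $X$ is uniform on $[0,p]$. Since $\s=2\min(X,P-X)/P$ is scale-invariant and depends only on $X/P$, a short direct computation gives $\PP(\s>\e\mid P=p)=1-\e$ for every $p>0$. Integrating this identity against the conditional law of $P$ given $\{P<p\}$ preserves it, which is exactly the first assertion.

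For the generic case $\g_1\neq\g_2$, I would change variables via $(X,Y)\mapsto(P,\s)$. This map is two-to-one onto $(0,\infty)\times[0,1]$, the two pre-images of $(p,s)$ corresponding to the branches $\{X\leq Y\}$ and $\{Y\leq X\}$ being
\[
(X,Y)=\bigl(\tfrac{sp}{2},\,p(1-\tfrac{s}{2})\bigr)\qquad\text{and}\qquad(X,Y)=\bigl(p(1-\tfrac{s}{2}),\,\tfrac{sp}{2}\bigr).
\]
On each branch the Jacobian has absolute value $p/2$, so summing the two contributions yields the joint density
\[
f_{P,\s}(p,s)=\frac{\g_1\g_2\,p}{2}\left[e^{-\g_2 p+(\g_2-\g_1)sp/2}+e^{-\g_1 p+(\g_1-\g_2)sp/2}\right].
\]

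I would then compute $\PP(\s>\e,P<p)=\int_0^p\int_\e^1 f_{P,\s}(u,s)\,\dint s\,\dint u$. The key observation is that at $s=1$ the two exponents above both collapse to $-(\g_1+\g_2)u/2$, which causes the four boundary terms arising from the inner integral to reduce to only two; after this cancellation the inner integral equals
\[
\frac{\g_1\g_2}{\g_1-\g_2}\left[e^{-u(2\g_2+\e(\g_1-\g_2))/2}-e^{-u(2\g_1-\e(\g_1-\g_2))/2}\right],
\]
and the outer integration in $u$ is immediate. Dividing by $\PP(P<p)=(\g_1(1-e^{-\g_2 p})-\g_2(1-e^{-\g_1 p}))/(\g_1-\g_2)$, computed from the hypoexponential density $f_P(t)=\frac{\g_1\g_2}{\g_1-\g_2}(e^{-\g_2 t}-e^{-\g_1 t})$ of $X+Y$, and collecting over a common denominator produces the stated closed form. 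The main obstacle is entirely algebraic: spotting the cancellation at $s=1$ and assembling the resulting terms into the compact fraction requires careful sign bookkeeping that must be valid uniformly in the sign of $\g_1-\g_2$.
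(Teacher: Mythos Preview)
Your method is sound and genuinely different from the paper's. The paper works directly in $(x,y)$-coordinates: it writes ``without loss of generality $X>Y$'', restricts the integration to the wedge $\{y<x\}$, and multiplies by $2$. You instead perform the change of variables $(X,Y)\mapsto(P,\s)$ and explicitly add the contributions of the two branches $\{X\le Y\}$ and $\{Y\le X\}$. Your treatment of the Jacobian, the two pre-images, and the cancellation at $s=1$ is correct, and the resulting inner integral
\[
\frac{\g_1\g_2}{\g_1-\g_2}\Bigl[e^{-u(2\g_2+\e(\g_1-\g_2))/2}-e^{-u(2\g_1-\e(\g_1-\g_2))/2}\Bigr]
\]
is right; the outer integration and the division by $\PP(P<p)$ then go through.

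There is, however, a real discrepancy you should be aware of. The paper's ``WLOG $X>Y$'' plus a factor $2$ is only legitimate when the joint density is symmetric, i.e.\ when $\g_1=\g_2$ (equivalently $q=1/2$). For $q\neq 1/2$ the two wedges contribute unequally, so the paper's proof --- and the displayed closed form --- are in fact incorrect: the printed formula fails the sanity checks $\e=1$ (where it should vanish but gives $8\g_1\g_2(\g_1-\g_2)$ in the numerator) and $\e=0$ (where it should equal $1$). Your correct computation will therefore \emph{not} reproduce the stated expression; in particular the term $e^{-(\g_1+\g_2)p/2}$ in the paper's numerator should be $e^{-(2\g_2+\e(\g_1-\g_2))p/2}$, exactly what your derivation produces. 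So the only flawed sentence in your proposal is the final claim that the algebra ``produces the stated closed form'' --- it does not, but the fault lies with the target formula, not with your argument. The limiting assertion $\lim_{p\to 0}\PP(\s>\e\mid P<p)=1-\e$ and the $q=1/2$ case remain correct under either approach.
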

Theorem \ref{thm:smallPerimeter} shows that the conditional deviation functional $\s$, given $P<p$, follows a uniform distribution on its range $[0,1]$ in the particular case $q=1/2$. This means that not only in contrast to the case of large perimeter (see \cite{MeckeOsburg,OsburgDiplom}), but also in contrast to the case of small area considered in Theorem \ref{thm1} above, the asymptotic shape of cells that have small perimeter is not uniquely determined. This phenomenon is well reflected by the simulation study in forthcoming Section \ref{sec:simulations}. We also refer to a related short discussion at the beginning of Section 7 in \cite{HugSchneiderGFA} about the independence of the shape of the zero cell and its perimeter. Such an interpretation becomes less obvious whenever $q\neq 1/2$. However, it is easily seen from the precise formula stated in Theorem \ref{thm:smallPerimeter} that the limit relation $$\lim_{p\to 0}\PP(\s>\e|P<p)=1-\e$$ is in order.


\begin{figure}[t]
\begin{center}
\includegraphics[width=0.45\textwidth]{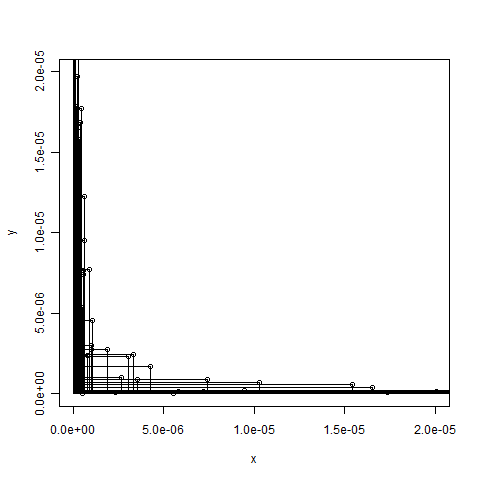}
\includegraphics[width=0.45\textwidth]{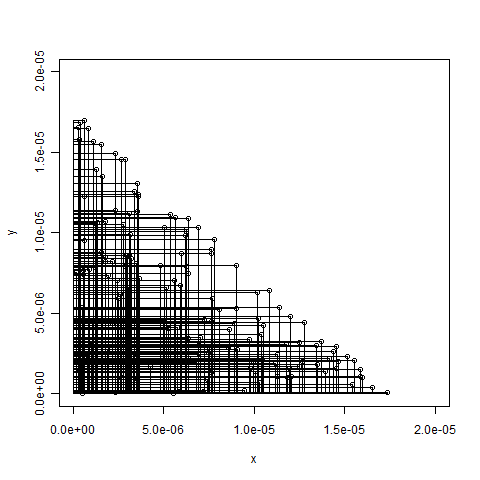}
\caption{\label{fig2D} \small The $150$ cells with smallest area (left) and smallest perimeter (right). Note that the axes on the left are cut off. The maximal edge length observed is $0.09$.}
\end{center}
\end{figure}

\subsection{Simulation results and outlook to higher space dimensions}\label{sec:simulations}

\begin{figure}[t]
\begin{center}

\includegraphics[width=0.45\textwidth]{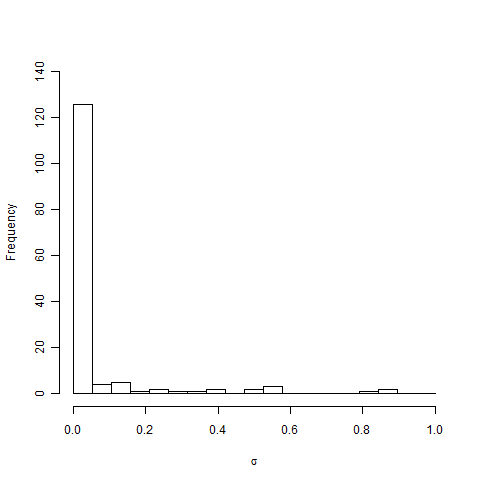}
\includegraphics[width=0.45\textwidth]{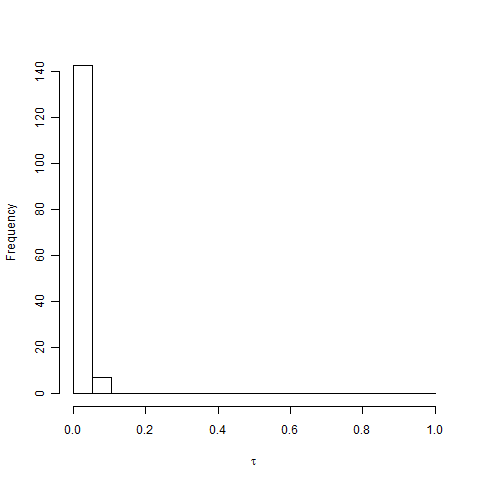}
\caption{\label{figArea2D} \small Histograms for the deviation functionals of the $150$ cells with smallest area.}
\end{center}
\end{figure}

To highlight and to underpin the theoretical results in the previous subsection we performed the following simulation study. We chose $\g=2$, $q=1/2$ and $\QQ={1\over 2}\big(\d_{L_x}+\d_{L_y}\big)$, where $L_x$ and $L_y$ are the two orthogonal coordinate axes. In this case, the edge lengths of the typical cell of the rectangular Poisson line tessellation are independently exponentially distributed with mean $1$; see Proposition \ref{prop:Kantenlaengen}. Hence, we simulated $10^{12}$ independent realizations of the random vector $(X,Y)$ with $X$ and $Y$ i.i.d.\ Exponential(1). From the collection of cells obtained this way, we selected the $150$ cells with smallest area. These are shown in Figure \ref{fig2D} (left). Histograms of the deviation functionals $\sigma$ and $\tau$ are shown in Figure \ref{figArea2D}. Both the line segment shape of the cells with an accumulation around the origin and the peak at zero in the histograms for the deviation functionals are nicely visible.

As discussed above, the area is only one measure of size of a cell. We investigate now the shape of small cells in the sense that their perimeter tends to zero. Therefore, the $150$ cells with smallest perimeter were extracted from the sample generated above. These cells together with their deviation functionals are shown in Figure \ref{fig2D} (right) and Figure \ref{figPerim2D}, respectively. In this situation, the maximum of the edge lengths also tends to zero, but Theorem \ref{thm:smallPerimeter} implies that $\sigma$ follows a uniform distribution on the interval $[0,1]$, which is also nicely visible in the histograms. The minimal and maximal sizes  of area and perimeter of the cells included in the statistics described above are given in Table \ref{tabMinMax}. 

\begin{table}[h]
\begin{center}
\begin{tabular}{|l||r|r|}
\hline
&minimum & maximum\\
\hline
\hline
area 2D& 1.79e-14&8.46e-12\\
perimeter 2D&1.06e-6&3.52e-5\\
\hline
\end{tabular}
\caption{\label{tabMinMax} \small Minimum and maximum of the size of the cells considered in the statistics (planar case).}
\end{center}
\end{table}

\begin{figure}[t]
\begin{center}
\includegraphics[width=0.45\textwidth]{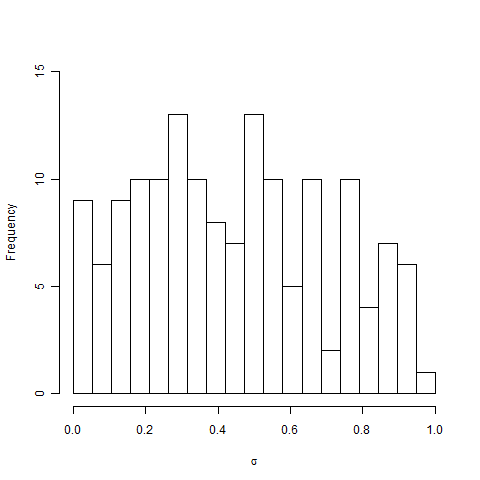}
\includegraphics[width=0.45\textwidth]{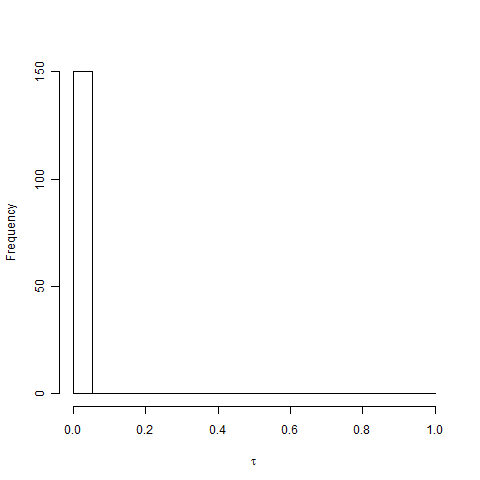}
\caption{\label{figPerim2D} \small Histograms for the deviation functionals of the 150 cells with smallest perimeter.}
\end{center}
\end{figure}

The Poisson line tessellations considered in this paper have natural analogues in higher space dimensions, the Poisson cuboid tessellations for which we refer to \cite{Favis} and to Section \ref{sec:ProofKantenlaengen} below. Also for these tessellations the question about the shape of small cells can be asked. Natural candidates are the typical cell of small volume, small surface area or small total edge length. Unfortunately, we were not able to derive the higher-dimensional pendants to Theorem \ref{thm1} or Theorem \ref{thm:smallPerimeter} in full generality. This is mainly due to technical complications that arise for space dimensions $d\geq 3$ and cause that the Abelian-type theorem used in the proof of Theorem \ref{thm1} can no more be applied. For this reason we carried out a simulation study concerning small cells in $\RR^3$. For this purpose, $10^{12}$ independent realizations of the random vector $(X_1,X_2,X_3)$ with $X_1,X_2,X_3$ i.i.d.\ Exponential(1) were generated representing the random edge lengths of the typical cell (this is justified by Proposition \ref{prop:KantenlaengenHoeherdimensional} below). From the sample of cells the $150$ cells with smallest volume, surface area and total edge length were extracted. Histograms for the deviation functionals $\sigma=3\min(X_1,X_2,X_3)/(X_1+X_2+X_3)$ (again the factor 3 yields a value of 1 for a cube and the range $[0,1]$) and $\tau=\max(X_1,X_2,X_3)$ for these cells are shown in Figure \ref{figVolume3D}. The minimal and maximal sizes of the cells included in the statistics above are summarized in Table \ref{tabMinMax2}. 

\begin{table}[h]
\begin{center}
\begin{tabular}{|l||r|r|}
\hline
&minimum & maximum\\
\hline
\hline
volume 3D&3.20 e-15&4.97e-13\\
surface area 3D&1.74e-8&3.58e-7\\
total edge length 3D&6.80e-4&3.88e-3\\
\hline
\end{tabular}
\caption{\label{tabMinMax2} \small Minimum and maximum of the size of the cells considered in the statistics (spatial case).}
\end{center}
\end{table} 

\begin{figure}[t]
\begin{center}
\includegraphics[width=0.3\textwidth]{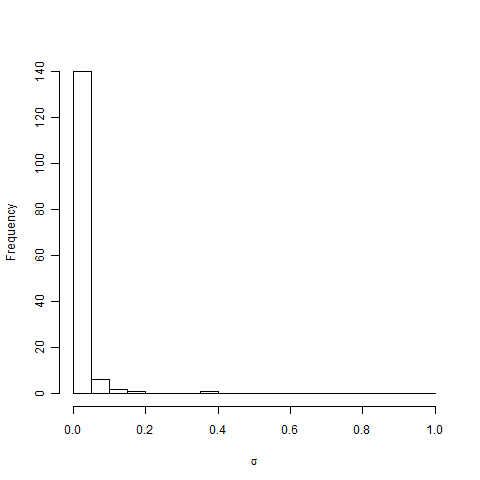}
\includegraphics[width=0.3\textwidth]{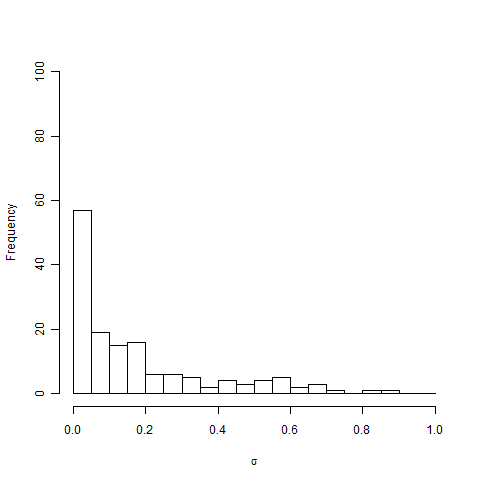}
\includegraphics[width=0.3\textwidth]{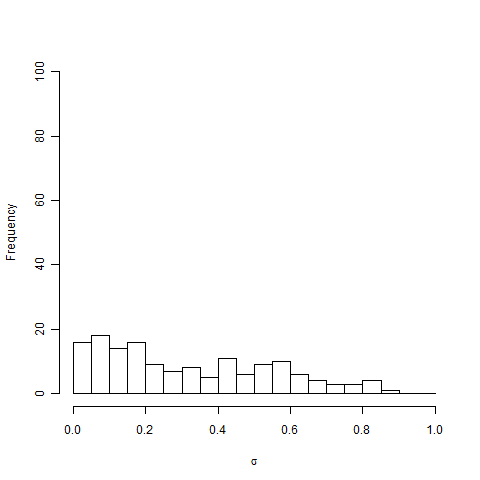}\\
\includegraphics[width=0.3\textwidth]{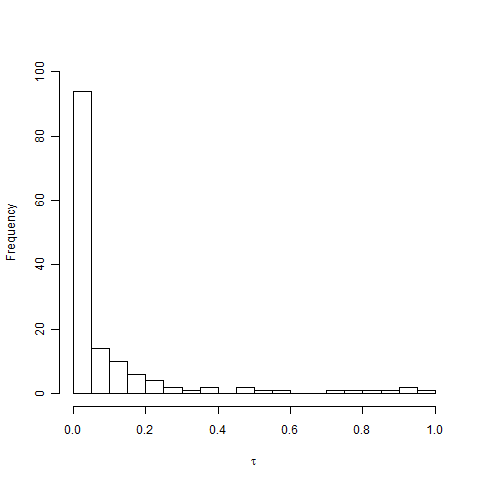}
\includegraphics[width=0.3\textwidth]{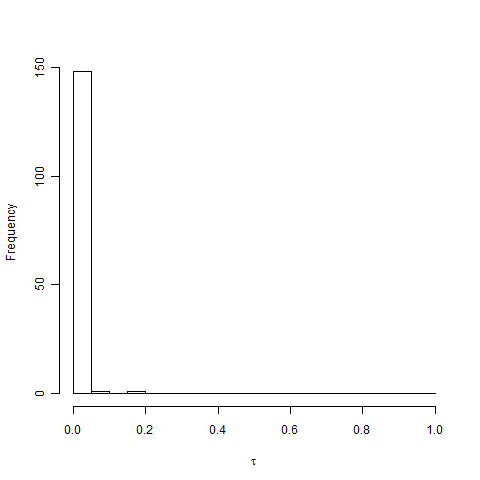}
\includegraphics[width=0.3\textwidth]{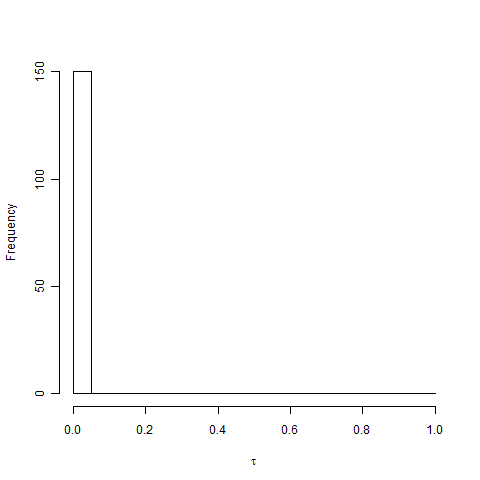}
\caption{\label{figVolume3D} \small Histograms for the deviation functionals $\sigma$ (first row) and $\tau$ (second row) of the 150 cells with smallest volume (left), surface area (middle) and total edge length (right) in $\RR^3$.}
\end{center}
\end{figure}

\section{Proof of Proposition \ref{prop:Kantenlaengen} and its higher-dimensional extension}\label{sec:ProofKantenlaengen}

As announced in Section \ref{sec:results}, we will formulate and prove here a higher-dimensional version of Proposition \ref{prop:Kantenlaengen}. To state the result, we first need to introduce the higher-dimensional model. So, fix a space dimension $d\geq 2$, let $u_1,\ldots,u_d$ be linearly independent unit vectors in $\RR^d$ and fix weights $q_1,\ldots,q_d\in[0,1]$ such that $q_1+\ldots+q_d=1$. We define linear hyperplanes $H_1,\ldots,H_d$ by $$H_i={\rm span}(\{u_1,\ldots,u_d\}\setminus\{u_{d-i+1}\}),\qquad i\in\{1,\ldots,d\}\,,$$ and the probability measure $\QQ$ on the space $\cH_0$ of hyperplanes through the origin by putting $\QQ=q_1\d_{H_1}+\ldots+q_d\d_{H_d}$. The translation-invariant measure $\L$ on the space $\cH$ of (affine) hyperplanes in $\RR^d$ induced by $\QQ$ is given by $$\int\limits_{\cH}f(H)\,\L(\dint H)=\g\int\limits_{\cH_0}\int\limits_{H_0^\perp}f(H_0+x)\,\ell_{H_0^\perp}(\dint x)\QQ(\dint H_0)\,,$$ where $0<\g<\infty$ is a fixed constant and where $f:\cH\to\RR$ is non-negative and measurable. Note that taking $d=2$ and $q_1=1-q_2=q$ we get back the set-up described in Section \ref{sec:results} for the planar case $d=2$.

Now, let $\eta$ be a Poisson point process on $\cH$ with intensity measure $\L$ as defined above. By abuse of notation, we will identify $\eta$ with the random closed set in $\RR^d$, which is induced by the union of all hyperplanes in $\eta$. They decompose $\RR^d$ into a countable set of random parallelepipeds and the distribution of the typical cell (parallelepiped) of this tessellation is defined similarly as in \eqref{eq:DistributionTypical}. For $i\in\{1,\ldots,d\}$ let $L_i={\rm lin}(u_i)$ be the line spanned by $u_i$. Then the discussion around \cite[Equation (6.3)]{MilesFlats} together with \cite[Theorem 4.4.7]{SW} shows that $\eta\cap L_i$ is a homogeneous Poisson point process on $L_i$ of intensity $$\g_{L_i}=\g\sum_{j=1}^d q_j\,|\cos\angle(L_i,H_j^\perp)|\,,$$ where $\angle(L_i,H_j^\perp)$ is the angle between $L_i$ and $H_j^\perp$ ($i,j\in\{1,\ldots,d\}$). In the particular planar case $d=2$ we have $\g_{L_1}=\g(1-q)|\cos\angle(L_1,L_2^\perp)|=\g(1-q)|\cos\alpha|$ and $\g_{L_2}=\g q|\cos\angle(L_2,L_1^\perp)|=\g q|\cos\alpha|$ with $\alpha=\angle(L_1,L_2^\perp)$. We can now state the higher-dimensional version of Proposition \ref{prop:Kantenlaengen}.

\begin{proposition}\label{prop:KantenlaengenHoeherdimensional}
The edge lengths of a typical cell of a Poisson cuboid tessellation induced by $\eta$ are independent and exponentially distributed random variables with parameters $\g_{L_1},\ldots,\g_{L_d}$, respectively.
\end{proposition}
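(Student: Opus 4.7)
The plan is to exploit the product structure arising from the $d$ independent families of parallel hyperplanes. By the superposition/mapping theorem for Poisson processes applied to the explicit form of $\L$, the process $\eta$ decomposes as $\eta=\eta_1\cup\cdots\cup\eta_d$ with $\eta_1,\ldots,\eta_d$ independent, where $\eta_j$ consists of hyperplanes parallel to $H_j$ whose perpendicular footpoints on $H_j^\perp$ form a stationary Poisson point process of intensity $\g q_j$. Since $H_j=\operatorname{span}(\{u_1,\ldots,u_d\}\setminus\{u_{d-j+1}\})$ contains $u_i$ iff $i\neq d-j+1$, only $\eta_{d-i+1}$ contains hyperplanes transverse to $L_i$, so the edge of any cell in direction $u_i$ spans exactly the slab of $\eta_{d-i+1}$ that sandwiches the cell. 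A simple trigonometric computation then gives its length as $E_i=W_{d-i+1}/|\cos\angle(L_i,H_{d-i+1}^\perp)|$, with $W_{d-i+1}$ the perpendicular width of that slab.

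To obtain the joint law of $(E_1,\ldots,E_d)$, I would parameterize each cell $\cP$ by a tuple $(k_1,\ldots,k_d)\in\ZZ^d$, where $k_j$ selects the slab of $\eta_j$. The orthogonal projections $\pi_j:\RR^d\to H_j^\perp$ assemble into a linear isomorphism $\RR^d\to\bigoplus_j H_j^\perp$ (injectivity follows from $\bigcap_j H_j=\{0\}$, which in turn follows from the linear independence of the $u_i$), and under this isomorphism the cell centre $c(\cP)$ corresponds to the tuple of perpendicular slab midpoints $(m_{k_j}^{(j)})_j$. Replacing the centered square $[n]$ in \eqref{eq:DistributionTypical} by a product region $\prod_j I_n^{(j)}\subset\bigoplus_j H_j^\perp$ growing to fill $\RR^d$ — which leaves the limiting ratio unchanged up to an overall constant that cancels — both numerator and denominator become Campbell-type sums that factor across $j$ by independence of the $\eta_j$. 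This yields that under $\bP$ the edge lengths $E_1,\ldots,E_d$ are independent.

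For each marginal, the $\bP$-law of $W_j$ is the law of the \emph{typical interval} of the 1D stationary Poisson process of footpoints of $\eta_j$ on $H_j^\perp$, defined by the natural 1D analogue of \eqref{eq:DistributionTypical} weighted by interval midpoints. A standard Palm-calculus argument shows this coincides with $\operatorname{Exp}(\g q_j)$ — for a stationary Poisson process of rate $\l$ on $\RR$ the Palm distribution at an interval midpoint yields $\operatorname{Exp}(\l)$ by the memoryless property. Rescaling by $1/|\cos\angle(L_i,H_{d-i+1}^\perp)|$ then gives $E_i\sim\operatorname{Exp}(\g q_{d-i+1}|\cos\angle(L_i,H_{d-i+1}^\perp)|)$. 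In the stated formula $\g_{L_i}=\g\sum_j q_j|\cos\angle(L_i,H_j^\perp)|$ every term with $j\neq d-i+1$ vanishes, because $L_i\subset H_j$ forces the cosine to be zero, and so $\g_{L_i}$ reduces to exactly the above rate, completing the proof.

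The main technical obstacle is the factorization step: changing the observation window in \eqref{eq:DistributionTypical} from the square $[n]$ to a product window in the $\bigoplus_j H_j^\perp$ coordinates and verifying that the limit of the normalised ratio is preserved. Because the process of cell centres is stationary with positive intensity, this follows from a routine ergodic/sandwiching argument, but it is where the bulk of the careful bookkeeping lies.
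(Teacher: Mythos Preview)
Your argument is essentially correct and arrives at the same conclusion, but the route differs from the paper's. The paper does not work directly with the limiting ratio \eqref{eq:DistributionTypical}; instead it invokes a known Palm-type identification (Theorem~10.4.7 in \cite{SW}, see also \cite{Mecke99}) which says that, up to translation, the typical cell has the same law as a particular one of the $2^d$ sub-parallelepipeds into which the hyperplanes $H_1,\ldots,H_d$ cut the zero-cell $Z_0$. That distinguished piece $Z$ has a vertex at the origin, so its edge lengths are simply the distances along $L_1,\ldots,L_d$ from the origin to the nearest point of $\eta$; independence and exponentiality then drop out immediately from elementary Poisson-process properties, with no window-change or factorization argument needed.

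Your approach trades this external input for a self-contained computation: you parameterize cells by slab indices, push the centre map through the linear isomorphism $\RR^d\to\bigoplus_j H_j^\perp$, and then argue that the ratio in \eqref{eq:DistributionTypical} factors across $j$. This is conceptually transparent and makes the product structure explicit, but, as you rightly flag, the step where you replace the square window $[n]$ by a product window in the skewed coordinates carries genuine weight and must be justified (e.g.\ via a sandwiching of one window between scaled copies of the other together with the spatial ergodic theorem for the stationary point process of centres). The paper's use of the zero-cell/typical-cell relationship sidesteps this entirely, at the cost of citing a non-trivial result from Palm theory. Your final identification $\g_{L_i}=\g q_{d-i+1}|\cos\angle(L_i,H_{d-i+1}^\perp)|$ via the vanishing of the cosines for $j\neq d-i+1$ is correct and matches the paper's formula.
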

\begin{proof}
As a first step let us describe an alternative construction for the random set $\eta$, which in the planar case has already been considered in the introduction. Recall the definition of the lines $L_i$ from above and let for each $i\in\{1,\ldots,d\}$, $\xi_i$ be a homogeneous Poisson point process on $L_i$ with intensity $\g_{L_i}$. We assume that $\xi_1,\ldots,\xi_d$ are independent. Now, for each $i\in\{1,\ldots,d\}$, place hyperplanes through the Poisson points on $L_i$ orthogonal to $L_i$. The collection (or union) of all hyperplanes constructed this way has the same distribution as $\eta$.

As a next step we describe a construction of the typical cell. To carry this out, we denote by $Z_0$ the almost surely uniquely determined $d$-dimensional parallelepiped of the tessellation induced by $\eta$ that contains the origin. Then $Z_0$ is divided by the hyperplanes $H_1,\ldots,H_d$ into $2^d$ smaller parallelepipeds meeting at the origin. With probability one, exactly one of these parallelepipeds, $Z$ say, has the property that for all its corners the first coordinate is non-negative. Now, Theorem 10.4.7 in \cite{SW} implies that (up to translations) $Z$ has distribution $\bP$ defined by (the higher-dimensional analogue of) \eqref{eq:DistributionTypical}. In other words, $Z$ has (again up to translations) the same distribution as the typical cell of the Poisson hyperplane tessellation induced by $\eta$, see also Section 4 in \cite{Mecke99}. Note in particular that the random parallelepiped $Z$ has one of its corners at the origin. In view of the construction of $\eta$ described at the beginning of the proof, this implies that the edge lengths of $Z$ are the distances from the origin of $d$ independent and homogeneous Poisson point processes on $L_1,\ldots,L_d$ with intensities $\g_{L_1},\ldots,\g_{L_d}$, respectively, to their next point on the left or right (depending on the position of $Z$ within $Z_0$). Thus, standard properties of such point processes allow us to conclude that the edge-lengths of the typical parallelepiped are independent and exponentially distributed with parameters $\g_{L_1},\ldots,\g_{L_d}$.
\end{proof}

\section{Proof of Theorem \ref{thm1}}\label{sec:proofThm1}

\subsection{Reduction}

We claim that without loss of generality we can restrict the proof of Theorem \ref{thm1} to the case $\g=2$, $q=1/2$ and $\QQ={1\over 2}\big(\d_{L_x}+\d_{L_y}\big)$, where $L_x$ and $L_y$ are the two orthogonal coordinate axes. To show this, let us denote such a tessellation by $\PHT^*$ and a tessellation with general parameters $\g$, $q$ and $\QQ$ by $\PHT(\g,q,\QQ)$. We notice now that due to our assumptions on $\g$ and $q$ there exists a non-degenerate linear transformation $f=f(\g,q,\QQ):\RR^2\to\RR^2$ such that $\PHT(\g,q,\QQ)$ after application of $f$ has the same distribution as $\PHT^*$. We notice further that the images under $f$ of a parallelogram, a line segment and a point are again a parallelogram, a line segment and a point, respectively. Thus, the statement of Theorem \ref{thm1} is invariant under non-degenerate linear transformations of the underlying Poisson line tessellation. This implies that it is sufficient to establish the statement for one special choice of $\g$, $q$ and $\QQ$, viz.\ $\PHT^*$.

\subsection{Proof for $\PHT^*$}


\paragraph{Proof of \eqref{ErgebnisMinRate}.}

To simplify the calculations we work from now on with $\widehat\s=\s/2$ and translate the result afterwards to the original deviation functional $\s$. To start with the calculation, we write the conditional probability as $${\Bbb P}(\widehat\sigma>\e|A<a)=\frac{{\Bbb P}(\widehat\sigma >\e,A<a)}{{\Bbb P}(A<a)}.$$ In what follows, we consider the numerator and the denominator  separately. To deal with the numerator we have to consider the event $\hat{\sigma}={\min\{X,Y\}\over X+Y} > \e$ and $A=XY<a$. Without loss of generality we can assume that $\min\{X,Y\}=Y$. The condition ${\min\{X,Y\}\over X+Y} > \e$ then becomes ${Y\over X+Y} > \e$ and implies $\frac{\e}{1-\e}X<Y<X$. Therefore $\e < {1\over 2}$ has to apply to retain $Y$ as the minimum. 
Taking the second condition $A=XY<a$ into account, leads to ${\displaystyle \min\left\{X, \frac{a}{X}\right\}}$ as the upper bound for $Y$. Considering the lower bound for $Y$, that is $Y=\frac{\e}{1-\e}X$, we get the upper bound for $X$ from the condition $XY<a$. Thus, the numerator can be written as
\begin{eqnarray*}
\iint \limits_{{\widehat\sigma >\e \atop A< a}}e^{-x-y}\,\dint y\dint x &=& 2 \int\limits_{0}^{\sqrt{\frac{a(1-\e)}{\e}}}\int\limits_{\frac{x\e}{1-\e}}^{\min\left\{x, \frac{a}{x}\right\}}e^{-x-y}\,\dint y\dint x\\
&=&2 \int\limits_{0}^{\sqrt{a}}\int\limits_{\frac{\e x}{1-\e}}^{x}e^{-x-y}\,\dint y\dint x+2 \int\limits_{\sqrt{a}}^{\sqrt{\frac{a(1-\e)}{\e}}}\int\limits_{\frac{\e x}{1-\e}}^{\frac{a}{x}}e^{-x-y}\,\dint y\dint x\\
&\leq& 2 \int\limits_{0}^{\sqrt{a}}
\int\limits_{\frac{\e x}{1-\e}}^{x}\,\dint y\dint x+2 \int\limits^{\sqrt{\frac{a(1-\e)}{\e}}}_{\sqrt{a}}\int\limits_{\frac{\e x}{1-\e}}^{\frac{a}{x}}\,\dint y\dint x\,.
\end{eqnarray*}
The last expression can be determined by a straight forward integration procedure, which yields that asymptotically, as $a\to 0$, it behaves like $a$ times a constant depending on $\e$ (the two logarithmic terms cancel out).

Using the substitution $u=x\frac{1}{\sqrt{a}}$ and $v=y\frac{1}{\sqrt{a}}$ we can write the denominator as
\begin{eqnarray*}
\iint\limits_{A<a} e^{-x-y}\,\dint y\dint x &=&\int\limits_{0}^{\infty}\int\limits_{0}^{\frac{a}{x}}e^{-(x+y)}\,\dint y\dint x= \int\limits_{0}^{\infty}\int\limits_{0}^{\frac{1}{u}}e^{-\sqrt{a}(u+v)}a\,\dint v\dint u\,.
\end{eqnarray*}
Applying the substitution $s=u+v$ and $t=u-v$ in the next step, we get
\begin{eqnarray}\label{eq:ZeischenschrittXXXYYY}
\iint\limits_{A<a} e^{-x-y}\,\dint y\dint x&=&\frac{a}{2}\int\limits_{0}^{\infty}\int\limits_{-s}^{s}e^{-\sqrt{a}s}\,{\bf 1}(s^2-t^2<4)\,\dint t\dint s\,.
\end{eqnarray}
Now we split this double integral and calculate the resulting integrals directly as far as possible:
\begin{eqnarray}
\nonumber &&\frac{a}{2}\int\limits_{0}^{\infty}\int\limits_{-s}^{s} e^{-\sqrt{a}s}\,{\bf 1}(s^2-t^2<4)\,\dint t\dint s\\
\nonumber &=&\frac{a}{2}\int\limits_{0}^{\infty}e^{-\sqrt{a}s}\left(\;\int\limits_{-s}^{s}{\bf 1}(s\leq 2)\,\dint t+\int\limits_{-s}^{s}{\bf 1}(   |t|\geq\sqrt{s^{2}-4}){\bf 1}(s> 2)\,\dint t\right)\dint s\\
\nonumber &=&a\left(\int\limits_{0}^{2}se^{-\sqrt{a}s}\,\dint s+\int\limits_{2}^{\infty}e^{-\sqrt{a}s
}(s-\sqrt{s^{2}-4})\,\dint s\right)\\
&=&1-a\int\limits_{2}^{\infty}e^{-\sqrt{a}s
}\,\sqrt{s^{2}-4}\,\dint s.\label{eq:proofA1}
\end{eqnarray}
Even if the integral in \eqref{eq:proofA1} looks rather innocent, its asymptotic behavior as $a\to 0$ turns out to be not accessible with elementary methods as above. To overcome this difficulty we make use of a theorem of Abelian type. So, let $F(s):=\sqrt{s^{2}+4s}$ and write $${\cal I}(a):=a\int\limits_{2}^{\infty}e^{-\sqrt{a}s}\,\sqrt{s^{2}-4}\,\mathrm{d}s
=a e^{-2\sqrt{a}}\int\limits_{0}^{\infty}e^{-\sqrt{a}s}\,F(s)\,\dint s\,,$$which arises by a shift $s\mapsto s+2$,
and use now a relation between the Laplace transformation and the Laplace-Stieltjes transformation \cite[Theorem 2.3a]{Widder} to conclude from the Theorem of Abelian type \cite[Theorem 8.5.2]{Kawata} (or more precisely from Corollary 8.5.1 ibidem) that
\begin{equation}\label{eq:proofA2}
\lim_{a\to 0}{{\cal I}(a)\over e^{-2\sqrt{a}}}=1.
\end{equation}
Putting together \eqref{eq:proofA1} and \eqref{eq:proofA2} we see with a Taylor expansion of the exponential function that ${\Bbb P}(A<a)$ behaves asymptotically like $\sqrt{a}$ in the limit as $a\to 0$. Combining this with the asymptotic behavior of the numerator implies \eqref{ErgebnisMinRate} for $\widehat\s$ as well as for the original deviation functional $\s$.\hfill $\Box$


\paragraph{Proof of \eqref{ErgebnisMax}.}

We start by re-writing the conditional probability as $${\Bbb P}(\tau>\e|A< a)=\frac{{\Bbb P}(\max\{X,Y\}>\e,A<a)}{{\Bbb P}(A< a)}.$$ The denominator is the same as in the proof of \eqref{ErgebnisMinRate} and behaves like $\sqrt{a}$ as $a\to 0$. Next, we consider the numerator and write
\begin{equation*}
\begin{split}
{\Bbb P}(\max\{X,Y\}>\e,A<a) &\leq {\Bbb P}(X>\e, A< a)+{\Bbb P}(Y>\e, A< a)\\
&= 2{\Bbb P}(X>\e, A< a)=2\int_{\e}^{\infty}\int_{0}^{\frac{a}{x}}e^{-x-y}\,\dint y\dint x.
\end{split}
\end{equation*}
Since numerator and denominator both tend to $0$ as $a\to 0$, we may apply l'Hospital's rule. For the numerator we find
\begin{align*}
\frac{\dint}{\dint a}\int_{\e}^{\infty}\int_{0}^{\frac{a}{x}}e^{-x-y}\,\dint y\dint x &=\int_{\e}^{\infty}\frac{e^{-\frac{a}{x}-x}}{x}\,\dint x \to \Gamma(0,\e)<\infty\quad{\rm as}\quad a\to 0,
\end{align*}
where $\Gamma(\,\cdot\,,\,\cdot\,)$ stands for the lower incomplete $\Gamma$-function.

Next, we turn to the denominator. Using \eqref{eq:ZeischenschrittXXXYYY} and \eqref{eq:proofA1}, differentiation yields
\begin{equation*}
\begin{split}
\frac{\dint}{\dint a}\int_{0}^{\infty}\int_{0}^{\frac{a}{x}}e^{-x-y}\,\dint y\dint x &=\frac{\dint}{\dint a}\left(1-a\int\limits_{2}^{\infty}e^{-\sqrt{a}s
}\,\sqrt{s^{2}-4}\,\dint s\right)\\
&=\frac{1}{2}\int\limits_{2}^{\infty}e^{-\sqrt{a}s}\,(-2+\sqrt{a}\,s)\,\sqrt{s^2-4}\;\dint s.
\end{split}
\end{equation*}
Now we split this integral into two parts and apply the shift $s\mapsto s+2$. This leads to
\begin{equation*}
\begin{split}
\frac{\dint}{\dint a}\int_{0}^{\infty}\int_{0}^{\frac{a}{x}}e^{-x-y}\,\dint y\dint x &= -\int_{2}^{\infty}e^{-\sqrt{a}s}\,\sqrt{s^2-4}\,\dint s +\frac{1}{2}\int_{2}^{\infty}e^{-\sqrt{a}s}\,\sqrt{a}\,s\,\sqrt{s^2-4}\,\dint s\\
&=-e^{-2\sqrt{a}}\int_{0}^{\infty}e^{-\sqrt{a}s}\sqrt{s^2+4s}\,\dint s\\
&\hspace{1.7cm} +\frac{\sqrt{a}}{2}e^{-2\sqrt{a}}\int_{0}^{\infty}e^{-\sqrt{a}s}\,(s+2)\,\sqrt{s^2+4s}\,\dint s.
\end{split}
\end{equation*}
Rearrangement then results in
\begin{equation*}
\begin{split}
\frac{\dint}{\dint a}\int_{0}^{\infty}\int_{0}^{\frac{a}{x}}e^{-x-y}\,\dint y\dint x &=- e^{-2\sqrt{a}}\int_{0}^{\infty}e^{-\sqrt{a}s}\,\sqrt{s^2+4s}\,\dint s\\
&\hspace*{1.7cm}+\sqrt{a}e^{-2\sqrt{a}}\int_{0}^{\infty}e^{-\sqrt{a}s}\,\sqrt{s^2+4s}\,\dint s\\
&\hspace*{1.7cm} +\frac{\sqrt{a}}{2}e^{-2\sqrt{a}}\int_{0}^{\infty}e^{-\sqrt{a}s}\,s\,\sqrt{s^2+4s}\,\dint s\\
&=\underbrace{(\sqrt{a}-1)e^{-2\sqrt{a}}\int_{0}^{\infty}e^{-\sqrt{a}s}\,\sqrt{s^2+4s}\,\dint s}_{=:T_1}\\ &\hspace{1.7cm}+\underbrace{\frac{\sqrt{a}}{2}e^{-2\sqrt{a}}\int_{0}^{\infty}e^{-\sqrt{a}s}\,s\,\sqrt{s^2+4s}\,\dint s}_{=:T_2}.
\end{split}
\end{equation*}
The term $T_1$ has already been discussed in the proof of \eqref{ErgebnisMinRate} above. Thus, we may concentrate on $T_2$ and write $T_2(a)$ to indicate its dependence on $a$. Defining $F(s):=s\sqrt{s^2+4s}$ we have that
\begin{equation*}
T_2(a)=\frac{\sqrt{a}}{2}e^{-2\sqrt{a}}\int_{0}^{\infty}e^{-\sqrt{a}s}F(s)\,\mathrm{d}s
\end{equation*}
and can thus apply \cite[Corollary 8.5.1]{Kawata} again, which gives that $T_2(a)$ behaves asymptotically like $e^{-2\sqrt{a}}/a$, as $a\to 0$. Putting things together we see that the numerator, after differentiation, is bounded, whereas the denominator, again after differentiation, tends to $+\infty$ as $a\to 0$, implying that $$\lim_{a\to 0}{\Bbb P}(\tau>\e|A\leq a)=0.$$ This completes the proof of \eqref{ErgebnisMax}.\hfill $\Box$

\section{Proof of Theorem \ref{thm:smallPerimeter}}\label{sec:ProofPerimeter}

Applying the reduction step as in the proof of Theorem \ref{thm1} is not possible here since the problem involving a small perimeter is not invariant under non-degenerate linear transformation. However, we can use Proposition \ref{prop:Kantenlaengen} to give a direct proof. It implies that the random edge lengths $X$ and $Y$ of the typical cell (parallelogram) are independent and identically distributed according to an exponential distribution with mean $\g_1:=\g_{L_1}$ and $\g_2:=\g_{L_2}$. If $\g_1=\g_2=\g$ it follows that the half perimeter length $P=X+Y$ is Erlang distributed with parameters $\g$ and $2$. In fact this was the reason for the factor $1/2$ in the definition of $P$. Thus,
\begin{equation}\label{eq:VerteilungP}
\PP(P<p)=1-(1+\g p)e^{-\g p}\qquad{\rm for\ any}\qquad p>0.
\end{equation}
If (without loss of generality) $\g_1>\g_2$, $P=X+Y$ has distribution function

\begin{equation}\label{eq:VerteilungPUngleich}
\PP(P<p)=1-{\g_1e^{-\g_2 p}-\g_2e^{-\g_1p}\over\g_1-\g_2}\qquad{\rm for\ any}\qquad p>0.
\end{equation}

Without loss of generality assume that $X>Y$ and have a closer look at the event $\{\s>\e,\,P<p\}$. If $X\in[0,p/2]$, then $Y$ may range between ${\e\over 2-\e}X$ and $X$, and if $X\in[p/2,p(1-\e/2)]$ then $Y$ ranges between ${\e\over 2-\e}X$ and $p-X$ (the remaining case $X>p(1-\e/2)$ contradicts $X+Y<p$ and $Y>{\e\over 2-\e}X$). Thus, $$\PP(\s>\e,\,P<p)=2\int\limits_0^{p/2}\int\limits_{{\e\over 2-\e}x}^{x}\g_1\g_2e^{-\g_1x-\g_2y}\,\dint y\dint x+2\int\limits_{p/2}^{p(1-\e/2)}\int\limits_{{\e\over 2-\e}x}^{p-x}\g_1\g_2e^{-\g_1x-\g_2y}\,\dint y\dint x.$$
If $\g_1=\g_2=\g$, evaluation of these integrals yields $$\PP(\s>\e,\,P<p)=(1-\e)(1-(1+\g p)e^{-\g p})$$ and in view of \eqref{eq:VerteilungP} the exact distributional result $\PP(\s>\e|P<p)=1-\e.$ If otherwise $\g_1>\g_2$, one shows that
\begin{equation*}
\begin{split}
&\PP(\s>\e |\,P<p) \\ &= {4\g_1\g_2\big((\g_1-\g_2)\e +\g_1 -\g_2-(\e(\g_1-\g_2)-2\g_1)e^{-{\g_1+\g_2\over 2}p}-(\g_1+\g_2)e^{-{2\g_1- \e(\g_1-\g_2)\over 2}p}\big)\over (\g_1+\g_2)(\g_1(1-e^{-\g_2p})-\g_2(1-e^{-\g_1 p}))(\e(\g_1-\g_2)-2\g_1)}\,,
\end{split}
\end{equation*}

which reduces to the separately treated uniform distribution if $\g_1=\g_2$. It remains to notice that in view of Proposition \ref{prop:Kantenlaengen}, $\g_1=\g_2$ if and only if $q=1/2$.
This completes the proof.\hfill $\Box$

\begin{remark}\rm
We would like to point out that the first result of Theorem \ref{thm:smallPerimeter} has a well-known background. Namely, let $X$ and $Y$ be two independent and exponentially distributed random variables. Then $X$ (or $Y$), given that $X+Y=s$ for some fixed $s>0$, is uniformly distributed on $[0,s]$.
\end{remark}

\subsection*{Acknowledgement}
We would like to thank two anonymous referees for their hints and suggestions. They were very helpful for us to improve the text.

\end{document}